\documentclass[12pt,a4paper,reqno]{amsart}
\usepackage{amsthm}
\usepackage{amssymb}
\usepackage{amsmath}
\usepackage{enumitem}
\usepackage{mathtools}
\usepackage{float}
\usepackage{tikz,color,amsthm}
\usetikzlibrary{positioning}
\usetikzlibrary{chains}
\usepackage{fullpage}



\usepackage[utf8]{inputenc}

\renewcommand{\O}{\mathcal{O}}
\newcommand{\Sto}{\mathsf{Sto}}
\newcommand{\GETOUT}[1]{}

\DeclareMathAlphabet{\mathpzc}{OT1}{pzc}{m}{it}
\def\lgrey{0.9}
\def\softgrey{0.8}
\definecolor{lightgrey}{rgb}{\lgrey,\lgrey,\lgrey}
\definecolor{grey}{rgb}{\softgrey,\softgrey,\softgrey}

 \theoremstyle{definition}
 \newtheorem{theorem}{Theorem}

 \newtheorem{conjecture}[theorem]{Conjecture}
 \newtheorem{lemma}[theorem]{Lemma}

 \newtheorem{example}[theorem]{Example}
 \newtheorem{proposition}[theorem]{Proposition}
 \newtheorem{question}[theorem]{Question}
 
 \newtheorem{definition}[theorem]{Definition}

 \theoremstyle{remark}

\definecolor{mygreen}{RGB}{80,160,80}

\setlength\marginparwidth{60pt}

\tikzstyle{vertex}=[circle, draw, inner sep=0pt, minimum size=6pt]
\newcommand{\vertex}{\node[vertex]}
\newcommand{\out}{\mathrm{out}}
\newcommand{\ind}{\mathrm{in}}
\newcommand{\comp}{\mathrm{comp}}

\title{A note on the lacking polynomial of the complete bipartite graph}
\author{Amal Alofi}
\author{Mark Dukes$^{\star}$}
\address{School of Mathematics and Statistics, University College Dublin, Belfield, Dublin 4, Ireland.}
\email{amal.alofi@ucdconnect.ie}
\email{mark.dukes@ucd.ie}
\keywords{graph polynomial; sandpile model; classification problem; lacking polynomial; complete bipartite graph; log-concavity}

\begin{document}
\begingroup
\def\uppercasenonmath#1{} 
\let\MakeUppercase\relax 
\maketitle
\endgroup

\begin{abstract}
The lacking polynomial is a graph polynomial introduced by Chan, Marckert, and Selig in 2013 that is closely related to the Tutte polynomial of a graph.
It arose by way of a generalization of the Abelian sandpile model and is essentially the generating function of the level statistic on the set of recurrent configurations, called stochastically recurrent states, for that model.
In this note we consider the lacking polynomial of the complete bipartite graph. 
We classify the stochastically recurrent states of the stochastic sandpile model on the complete bipartite graphs $K_{2,n}$ and $K_{m,2}$ where the sink is always an element of the set counted by the first index.
We use these characterizations to give explicit formulae for the lacking polynomials of these graphs. 
Log-concavity of the sequence of coefficients of these two lacking polynomials is proven, and we conjecture log-concavity holds for this general class of graphs. 
\end{abstract}
\bigskip

Graph polynomials provide a useful means by which to capture graph invariants.
The most studied graph polynomial is arguably the Tutte polynomial.
This bivariate polynomial $T_G(x,y)$ contains as specializations the chromatic polynomial, 
the Jones polynomial, and the reliability polynomial.
In addition to this, the evaluation of the Tutte polynomial at particular integral points close to the origin corresponds to the enumeration of combinatorially significant substructures in the graph.
The Tutte polynomial $T_M(x,y)$ of a matroid shares similarly rich properties.

In combinatorics, the property of number sequences being logarithmically concave (or log-concave) has attracted considerable attention over the years, 
see Stanley~\cite{stanley} for an overview of such results.
Recent advances in this topic include the proof of the Heron-Rota-Welsh conjecture for matroids by Adiprasito, Huh, and Katz~\cite{ahk} that concerns the sequence of coefficients of the characteristic polynomial of a matroid.
Quite a few specializations of the Tutte polynomial have been conjectured to be log-concave, and some of these have been established.  
For instance, Eur, Huh and Larson~\cite[Cor. 1.14]{ehl} proved the following result:

\medskip
\begin{center}
\begin{minipage}{0.8\textwidth}
{\it{For any rank $r$ matroid $M$, the coefficients of the polynomial $q^{r}T_M(q^{-1}, 1+q)$ form a log-concave sequence with no internal zeroes.}}
\end{minipage}
\end{center}
\medskip

This result establishes log-concavity for the special case of when $M$ is the cycle matroid of a graph $M=M(G)$.
An expression for the Tutte polynomial of the complete bipartite graph in terms of an exponential generating function of an associated polynomial was given by
Martin and Reiner~\cite[Prop. 7]{martinreiner} , while a closed form for the Tutte polynomial $T_{K_{n,2}}(x,y)$ is given in \cite[Thm 4.1]{allagan}:
$$T_{K_{n,2}}(x,y) = x^2(x+1)^{n-1} + \sum_{i=1}^{n-1} (x+y)^i (x+1)^{n-i-1}.$$
Indeed it is not clear that this form can be used to establish log-concavity of the sequence of coefficients
of $q^{r(M(K_{n,2}))}T_{M(K_{n,2})}(q^{-1}, 1+q)$.

Our interest in this note is a newly developed graph polynomial that is termed the {\em lacking polynomial} of a graph.
The lacking polynomial is a graph polynomial introduced by Chan, Marckert, and Selig~\cite{ssm} in 2013 that is closely related to the Tutte polynomial of a graph.
It arose from a variant of the Abelian sandpile model called the {\em stochastic sandpile model (SSM)}.
The lacking polynomial is the generating function of the level statistic on the set of recurrent configurations, called stochastically recurrent states, for that new model.
In \cite{ssm} it was shown that the lacking polynomial satisfies a deletion-contraction recurrence similar to that of the Tutte polynomial.
This then motivates the question as to whether the lacking polynomial also satisfies the property of being log-concave in quite general instances.

In this note we consider the lacking polynomial of the complete bipartite graph.
We classify the stochastically recurrent states of the stochastic sandpile model on the complete bipartite graphs $K_{2,n}$ and $K_{m,2}$ where the sink is always an element of the set counted by the first index.
We use these characterizations to give explicit formulae for the lacking polynomials of these graphs.
Log-concavity of the sequence of coefficients of these two lacking polynomials is proven, and we conjecture log-concavity holds for this general class of graphs.

The stochastically recurrent configurations of the SSM are derivable from considering all orientations of the underlying graph, and this naturally leads to the topic of tournaments and score sequences.
Recent related work includes Claesson, Dukes, Frankl\'in, and Stef\'ansson's paper~\cite{cdfs},
Selig's paper studying the SSM on the complete graph \cite{selig1} and Selig and Zhu's paper on the SSM for the complete bipartite graph~\cite{seligcompletebipartite}.
In addition to this, it also adds to the growing literature on graph polynomials related to the sandpile model \cite{graphpolys}.

In the stochastic sandpile model the classical sandpile toppling rule is replaced with an alternative rule whereby,
on toppling an unstable vertex, a grain may (but does not have to) be sent to each neighbouring vertex. 
It can be viewed as a Markov chain on the set of non-negative configurations where at each time step a grain is added to a random node followed by a complete stochastic stabilization.

One consequence of this is that the toppling of a vertex does not necessarily result in it becoming stable.
As part of their paper the authors introduced a notion of stochastically recurrent states as the analog of recurrent states for the classical model.
They provided a characterization for such states in terms of {\it orientations compatible with configurations}.
Further research into this model can be found in the papers~\cite{selig1,selig2}.

Let $G$ be a 
finite, unoriented, connected and loop-free graph with a distinguished vertex, $s$, called the sink.
A {\em stable configuration} on $G$ is an assignment of non-negative integers to each non-sink vertex such that the number of grains at a given vertex is strictly
less than its degree.
First, we will recall a definition from Chan et al.~\cite{ssm} which explains what it means for an orientation to be compatible with a configuration.
\begin{definition}[Chan et al.~\cite{ssm}]\label{defcom}
Let $c$ be a configuration on $G$. 
An orientation $\O$ of $G$ is an assignment of a direction to each of the edges of $G$.
We say that configuration {\it{$c$ is compatible with $\O$}} (and likewise {\it{$\O$ is compatible with $c$}}) if for all non-sink vertices $v$ in $G$,
$$\ind_{\O} (v) ~ \geq ~ d(v)-c_{i},$$
where $\ind_{\O} (v)$ is the number of incoming edges to $v$ w.r.t. $\O$ and $d(v)$ is the degree of vertex $v$.
In other words, the number of incoming edges and grains on a vertex reaches at least the degree of this vertex.
\end{definition}
We denote by $\comp(\O)$ the set of stable configurations on $G$ that are compatible with $\O$.
Note that, in comparison to the paper~\cite{ssm}, the inequality in Definition~\ref{defcom} is missing one on the right hand side.
This is because of a subtle change to the model. 
In \cite{ssm} they considered a vertex unstable if the number of grains at a vertex exceeds its degree, whereas we consider a vertex unstable if the number of grains at vertex is not less than the degree which is in line with the definition of the ASM in \cite{dlb}. The two notions are equivalent.

\begin{theorem}[Chan et al.~\cite{ssm}]\label{thcom}
A stable configuration $c$ on $G$ is {\em stochastically recurrent} if and only if there exists an orientation $\O$ on $G$ such that $c\in \comp(\O)$. It implies that $\Sto(G)$, the set of all  stochastically recurrent states on $G$, may be written
$$\Sto(G) = \bigcup_{ \mathrm{orientations }~ \O \atop \mathrm{of } ~ G } \ \comp(\O)$$
where the union is taken over all orientations on $G$.
\end{theorem}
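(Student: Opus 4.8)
The plan is to split the statement into a purely combinatorial characterization of the configurations admitting a compatible orientation, and then to match that characterization with recurrence through the SSM dynamics. For Step~1, observe that since $d(v)=\ind_{\O}(v)+\out_{\O}(v)$, the inequality of Definition~\ref{defcom} is equivalent to $\out_{\O}(v)\le c_v$ for every non-sink vertex $v$; thus $c\in\comp(\O)$ for some orientation $\O$ if and only if $G$ has an orientation with out-degree at most $c_v$ at each non-sink $v$. By Hakimi's theorem on orientations with prescribed out-degree bounds (with bound $c_v$ at each non-sink vertex and no bound at $s$), such an orientation exists if and only if
$$\sum_{v\in W}c_v\ \ge\ e_G(W)\qquad\text{for every nonempty }W\subseteq V(G)\setminus\{s\},$$
where $e_G(W)$ is the number of edges of $G$ with both endpoints in $W$. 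Call a set $W$ violating this a \emph{blocking set} of $c$. It then remains to prove that a stable configuration $c$ is stochastically recurrent if and only if it has no blocking set.

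\emph{No blocking set $\Rightarrow$ recurrent.} Here I would use the standard feature of the model that the stochastically recurrent states form a single communicating class of the Markov chain which contains the maximal stable configuration $c_{\max}$ (given by $c_{\max}(v)=d(v)-1$); it then suffices, starting from a configuration $c$ with no blocking set, to exhibit a legal trajectory of the SSM connecting $c$ to $c_{\max}$. One takes the orientation $\O$ produced in Step~1 and builds a legal avalanche in which $\O$ dictates, at each toppling, which neighbours receive a grain: process the strongly connected components of $\O$ in reverse topological order of the condensation; a component that is a single vertex $v$ is handled by a Dhar-type burning move (fire $v$ once it has received a grain along each incoming edge, which is legal precisely because $c_v+\ind_{\O}(v)\ge d(v)$), while a nontrivial component is additionally made to fire around a directed cycle — possible in the SSM exactly because a toppling may send grains along an arbitrary subset of the incident edges. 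A grain-conservation computation then confirms the trajectory returns to $c$.

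\emph{Recurrent $\Rightarrow$ no blocking set.} Arguing the contrapositive, suppose $c$ has a blocking set $W$; I would show $c$ cannot be reached from $c_{\max}$, hence is not recurrent. First, $c_{\max}$ itself has no blocking set: identifying $V(G)\setminus W$ to a single vertex yields a connected multigraph on $|W|+1$ vertices, hence with at least $|W|$ non-loop edges, i.e.\ $e_G(W)+e_G(W,V(G)\setminus W)\ge|W|$, which rearranges to $\sum_{v\in W}(d(v)-1)\ge e_G(W)$. Second, one shows by a burning-type argument that no configuration with a blocking set can occur along a recurrent trajectory: fixing a blocking set $W$ and any legal avalanche, the edges crossing into $W$ deliver at most one grain each per toppling, and this scarcity — the governing quantity being the internal edge count $e_G(W)$ rather than a per-vertex degree — prevents all of $W$ from firing in a way that returns the configuration to $c$.

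\emph{Main obstacle.} Step~1 and the first half of the last step are routine; the work is concentrated in the two dynamical arguments — constructing the explicit returning avalanche from a compatible orientation, and establishing that a configuration with a blocking set is unreachable from $c_{\max}$ — and both hinge on the same feature separating the SSM from the ASM, namely that a toppling vertex chooses which of its neighbours receive a grain (which is also precisely why the stochastically recurrent states properly extend the ASM-recurrent ones).
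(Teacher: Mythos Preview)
The paper does not prove this theorem: it is stated as a result of Chan, Marckert, and Selig~\cite{ssm} and is quoted without proof, so there is no in-paper argument to compare your proposal against. Your Step~1 (rewriting compatibility as $\out_{\O}(v)\le c_v$ and invoking Hakimi's orientation theorem to obtain the blocking-set criterion) is clean and is indeed the combinatorial backbone of the original proof in~\cite{ssm}.

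The dynamical half of your sketch, however, has a directional issue. To show that a configuration $c$ with no blocking set is stochastically recurrent you must exhibit a positive-probability trajectory \emph{from $c_{\max}$ to $c$}, since $c_{\max}$ lies in the (unique) recurrent class and recurrence is inherited by states reachable from it --- not the other way around. Your paragraph instead builds a trajectory starting at $c$ and ``returning to $c$'', which only shows $c$ communicates with itself and says nothing about reachability from the recurrent class; a transient state can certainly loop to itself with positive probability. The avalanche you describe (processing strong components of $\O$ in reverse topological order, firing along a directed cycle inside nontrivial components) is morally the right object, but it should be run \emph{into} $c$ from $c_{\max}$: one adds grains at the sink's neighbours and uses the orientation to steer the stabilization so that the net effect deposits exactly $c_{\max}(v)-c_v$ fewer grains at each $v$. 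For the converse, your contrapositive is the right shape, but ``scarcity\ldots prevents all of $W$ from firing in a way that returns the configuration to $c$'' is not yet an argument; what one actually proves is an invariant: if $W$ is not blocking for the current configuration, then after any single grain-addition-plus-stochastic-stabilization step $W$ is still not blocking. This is a short counting argument comparing grains entering $W$ across the cut to grains leaving $W$ via topplings, and it is where the inequality $\sum_{v\in W}c_v\ge e_G(W)$ is used in earnest.
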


Chan et al.~\cite{ssm} also introduced the lacking polynomial of a graph to be the generating function counting the stochastically recurrent configurations according to the number of grains by which a given configuration differs from the maximally stable configuration. 

\begin{definition}[Chan et al.~\cite{ssm}]\label{dela}
The lacking polynomial $L_G(x)$ is 
$$L_G (x) :=\sum_{c\in \Sto(G)}  x^{\ell(c)},$$ where
$$\ell(c) :=\sum_{v\in V(G)\setminus \{s\}}  l(v)$$
and $l(v) := d(v)- c(v)-1$ is the {\em lacking number} of vertex $v$.
\end{definition}

Readers familiar with the sandpile model literature will notice that the lacking polynomial is essentially the level polynomial (see e.g. Cori and Le Borgne~\cite{clb}) of the graph over the set of stochastically recurrent states with the sequence of coefficients reversed.

In this note we consider the stochastic sandpile model on the complete bipartite graph 
$K_{m,n}$ with vertex set $\{v_0,v_1,\ldots,v_{m+n-1}\}$.
We will treat $v_0$ as the sink $s$ and this graph has edges connecting vertices in 
the sets $\{v_0, v_1, v_2,\ldots,v_{m-1}\}$ and $\{v_m, v_{m+1}, \ldots,v_{m+n-1}\}$.

We characterise stochastically recurrent states on the graphs $K_{2,n}$ and $K_{m,2}$ and use these
characterisations to give expressions for the lacking polynomials $L_{2,n}(x)$ and $L_{m,2}(x)$ on those graphs. 
We also prove that the sequence of coefficients of both $L_{2,n}(x)$ and $L_{m,2}(x)$ are log-concave. 
This note is motivated by Alofi and Dukes~\cite{alofidukes} that considers rectangular tableaux representations of recurrent states of the Abelian sandpile model on the complete bipartite graph, and transformations upon them.

Theorem~\ref{thcom} allows us to write an expression for stochastically recurrent states on the complete bipartite graph$K_{m,n}$:

\begin{proposition}\label{thSto}
The set of stochastically recurrent states of the stochastic sandpile model on $K_{m,n}$ is
$$\Sto (K_{m,n}) = \bigcup_{ \mathrm{orientations }~ \O \atop \mathrm{of }~ K_{m,n}} \ \left\{(c_{1},\ldots,c_{m+n-1}) ~:~ \out_{\O} (v_i)\leq c_{i} < d(v_{i}), ~ \forall 1\leq i \leq m+n-1\right\}.$$
\end{proposition}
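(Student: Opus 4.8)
The plan is to obtain the statement as an immediate reformulation of Theorem~\ref{thcom}, trading the in-degree condition of Definition~\ref{defcom} for an out-degree condition. First I would invoke Theorem~\ref{thcom} in the case $G=K_{m,n}$ to write $\Sto(K_{m,n})=\bigcup_{\O}\comp(\O)$, the union ranging over all orientations $\O$ of $K_{m,n}$, where $\comp(\O)$ is by definition the set of stable configurations that are compatible with $\O$.

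Next, fix an orientation $\O$ and unwind, at each non-sink vertex $v_i$ with $1\le i\le m+n-1$, the two conditions defining membership in $\comp(\O)$. The stability condition is exactly $c_i<d(v_i)$. For the compatibility condition, I would use that every edge incident with $v_i$ is, under $\O$, either incoming to or outgoing from $v_i$, so that $\ind_{\O}(v_i)+\out_{\O}(v_i)=d(v_i)$; substituting $\ind_{\O}(v_i)=d(v_i)-\out_{\O}(v_i)$ into the inequality $\ind_{\O}(v_i)\ge d(v_i)-c_i$ of Definition~\ref{defcom} and cancelling $d(v_i)$ turns it into $\out_{\O}(v_i)\le c_i$. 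Conjoining these two conditions over all non-sink vertices identifies $\comp(\O)$ with the set $\{(c_1,\ldots,c_{m+n-1}): \out_{\O}(v_i)\le c_i<d(v_i),\ \forall\, 1\le i\le m+n-1\}$, and taking the union over all $\O$ yields the claimed expression.

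There is essentially no obstacle here: the argument is a one-line rewriting, and the only points requiring a little care are the indexing bookkeeping (the non-sink vertices of $K_{m,n}$ are $v_1,\ldots,v_{m+n-1}$, which matches the coordinates of a configuration) and the normalization of the inequality in Definition~\ref{defcom}, discussed in the remark preceding Theorem~\ref{thcom}, which ensures that a stable configuration is one with $c_i<d(v_i)$ rather than $c_i\le d(v_i)$. I would also note that this rewriting uses nothing about the complete bipartite structure and works verbatim for any graph $G$; the statement is phrased for $K_{m,n}$ only because that is the family of graphs analysed in the remainder of the note.
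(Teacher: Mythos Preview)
Your proposal is correct and follows essentially the same route as the paper: invoke Theorem~\ref{thcom}, then rewrite the compatibility inequality $\ind_{\O}(v_i)\ge d(v_i)-c_i$ as $c_i\ge \out_{\O}(v_i)$ using $\ind_{\O}(v_i)+\out_{\O}(v_i)=d(v_i)$, and combine with stability. Your observation that the argument uses nothing special about $K_{m,n}$ is accurate and worth noting.
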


\begin{proof}
Let  $c=(c_1, c_2,\ldots, c_{m+n-1})$ be a stable configuration on $K_{m,n}$.
Suppose it is compatible with an orientation $\O$ where $c_{i}< d(v_i)$  for all $1\leq i \leq m+n-1$. 
According to the Definition~\ref{defcom} we must have:
$$\ind_{\O} (v_i)\geq d(v_{i})-c_{i}.$$
This means
$c_{i} ~\geq~ d(v_{i})-\ind_{\O} (v_i) ~=~  \out_{\O} (v_i).$
When we combine the application of Definition~\ref{defcom} with Theorem~\ref{thcom} for $G=K_{m,n}$ we find 
$$\Sto (K_{m,n}) = \bigcup_{ \mathrm{orientations }~ \O \atop \mathrm{of }~ K_{m,n}} \ \{(c_{1},\ldots,c_{m+n-1}) ~:~ \out_{\O} (v_i)\leq c_{i}< d(v_{i}), ~ \forall 1\leq i \leq m+n-1\}.$$
\end{proof}

\begin{example}
Consider the graph $K_{2,2}$. 
To determine the stochastically recurrent states compatible with each orientation $\O$ of graph $K_{2,2}$ first we find $\out_{\O}(v_i)$ for all $1\leq i\leq 3$, and then when we apply Prop.~\ref{thSto}.
See Table~\ref{all:k:twotwo} for a listing of the configurations that are compatible with each orientation.
\begin{table}[h!]
\begin{center}
\def\fgfg{0.6}
\begin{tabular}{|l|@{$\qquad$}l|} \hline
\begin{minipage}[b]{1.2cm}
\begin{tikzpicture}[scale=0.6,node distance={6mm}, thick, main/.style = {draw}]
	\vertex[fill] (v0) at (0,1) [label=above:$v_{0}$] {};
	\vertex[fill] (v1) at (1,1) [label=above:$v_{1}$] {};
	\vertex[fill] (v2) at (0,0) [label=below:$v_{2}$] {};
	\vertex[fill] (v3) at (1,0) [label=below:$v_{3}$] {};
		\draw [<-] (v0) -- (v2);
		\draw[->] (v0) -- (v3);
		\draw[->] (v1) -- (v2);
		\draw[<-] (v1) -- (v3);
\end{tikzpicture}
\end{minipage}
& 
\begin{minipage}[b]{\fgfg\textwidth}
$\out_{\O} (v_1)=1, \out_{\O} (v_2)=1, \out_{\O} (v_3)=1$.\\ 
$\implies$ $1\leq  c_1 < 2$, $1\leq  c_2  < 2$, and $1\leq  c_3 < 2$.\\
The contribution to $\Sto(K_{2,2})$ is $\{(1,1,1)\}.$\\
\end{minipage} \\ \hline
\begin{minipage}[b]{1cm}
\begin{tikzpicture}[scale=0.6,node distance={6mm}, thick, main/.style = {draw}]
	\vertex[fill] (v0) at (0,1) [label=above:$v_{0}$] {};
	\vertex[fill] (v1) at (1,1) [label=above:$v_{1}$] {};
\vertex[fill] (v2) at (0,0) [label=below:$v_{2}$] {};
	\vertex[fill] (v3) at (1,0) [label=below:$v_{3}$] {};
	\draw [->] (v0) -- (v2);
		\draw[<-] (v0) -- (v3);
\draw[<-] (v1) -- (v2);
		\draw[->] (v1) -- (v3);
\end{tikzpicture}
\end{minipage}
&
\begin{minipage}[b]{\fgfg\textwidth}
$\out_{\O} (v_1)=1, \out_{\O} (v_2)=1, \out_{\O} (v_3)=1$.\\ 
$\implies$ $1\leq  c_1 < 2$, $1\leq  c_2  < 2$, and $1\leq  c_3 < 2$.\\
The contribution to $\Sto(K_{2,2})$ is $\{(1,1,1)\}.$\\
\end{minipage} \\ \hline
\begin{minipage}[b]{1cm}
\begin{tikzpicture}[scale=0.6,node distance={6mm}, thick, main/.style = {draw}]
	\vertex[fill] (v0) at (0,1) [label=above:$v_{0}$] {};
	\vertex[fill] (v1) at (1,1) [label=above:$v_{1}$] {};
\vertex[fill] (v2) at (0,0) [label=below:$v_{2}$] {};
	\vertex[fill] (v3) at (1,0) [label=below:$v_{3}$] {};
	\draw [->] (v0) -- (v2);
		\draw[->] (v0) -- (v3);
\draw[->] (v1) -- (v2);
		\draw[<-] (v1) -- (v3);
\end{tikzpicture}
\end{minipage}
&
\begin{minipage}[b]{\fgfg\textwidth}
$\out_{\O} (v_1)=1, \out_{\O} (v_2)=0, \out_{\O} (v_3)=1.$\\ 
$\implies$ $1\leq  c_1 < 2$, $0\leq  c_2  < 2$, and $1\leq  c_3 < 2$.\\
The contribution to $\Sto(K_{2,2})$ is $\{(1,0,1),(1,1,1)\}.$\\
\end{minipage} \\ \hline
\begin{minipage}[b]{1cm}
\begin{tikzpicture}[scale=0.6,node distance={6mm}, thick, main/.style = {draw}]
	\vertex[fill] (v0) at (0,1) [label=above:$v_{0}$] {};
	\vertex[fill] (v1) at (1,1) [label=above:$v_{1}$] {};
\vertex[fill] (v2) at (0,0) [label=below:$v_{2}$] {};
	\vertex[fill] (v3) at (1,0) [label=below:$v_{3}$] {};
	\draw [->] (v0) -- (v2);
		\draw[->] (v0) -- (v3);
\draw[<-] (v1) -- (v2);
		\draw[->] (v1) -- (v3);
\end{tikzpicture}
\end{minipage}
&
\begin{minipage}[b]{\fgfg\textwidth}
$\out_{\O} (v_1)=1, \out_{\O} (v_2)=1, \out_{\O} (v_3)=0.$\\ 
$\implies$ $1\leq  c_1 < 2$, $1\leq  c_2  < 2$, and $0\leq  c_3 < 2$.\\
The contribution to $\Sto(K_{2,2})$ is $\{(1,1,0),(1,1,1)\}.$\\
\end{minipage} \\ \hline
\begin{minipage}[b]{1cm}
\begin{tikzpicture}[scale=0.6,node distance={6mm}, thick, main/.style = {draw}]
	\vertex[fill] (v0) at (0,1) [label=above:$v_{0}$] {};
	\vertex[fill] (v1) at (1,1) [label=above:$v_{1}$] {};
	\vertex[fill] (v2) at (0,0) [label=below:$v_{2}$] {};
	\vertex[fill] (v3) at (1,0) [label=below:$v_{3}$] {};
	\draw [->] (v0) -- (v2);
		\draw[->] (v0) -- (v3);
\draw[<-] (v1) -- (v2);
		\draw[<-] (v1) -- (v3);
\end{tikzpicture}
\end{minipage}
&
\begin{minipage}[b]{\fgfg\textwidth}
$\out_{\O} (v_1)=0, \out_{\O} (v_2)=1, \out_{\O} (v_3)=1.$\\  
$\implies$ $0\leq  c_1 < 2$, $1\leq  c_2  < 2$, and $1\leq  c_3 < 2$.\\
The contribution to $\Sto(K_{2,2})$ is $\{(0,1,1),(1,1,1)\}.$\\
\end{minipage} \\ \hline
\end{tabular}
\end{center}
\caption{Checking all orientations of $K_{2,2}$.\label{all:k:twotwo}}
\end{table}
It follows that
$$\Sto(K_{2,2})=\{(0,1,1),(1,0,1),(1,1,0),(1,1,1)\}.$$
\end{example}

We can provide crude lower and upper bounds on the number of stochastically recurrent configurations by making use of the fact~\cite[Prop. 2.3]{ssm}
that $\Sto(K_{m,n})$ properly contains the set of classically recurrent states, and such states are in 1-1 correspondence with 
the number of spanning trees of the underlying graph.
Fieldler and Sedlacek~\cite{spanningkmn} showed the number of spanning trees of the complete bipartite graph $K_{m,n}$ is $n^{m-1}m^{n-1}$. 
Thus the number of stochastically recurrent configurations on $K_{m,n}$ is at least $ n^{m-1}m^{n-1}.$
A trivial upper bound is achieved by noting that the stochastically recurrent states are stable configurations, of which there are $n^{m-1}m^{n}$ many. Thus
\begin{align} \label{thmanup}
n^{m-1}m^{n-1} ~\leq~ |\Sto(K_{m,n})| ~\leq~ n^{m-1}m^{n}.
\end{align}
Note that the upper bound differs from the lower bound only by a factor of $m$.

\begin{question}
Can it be determined whether or not the number of stochastically recurrent states dominates the set of stable states? I.e. can it be decided
$$|\Sto(K_{m,n})| \lessgtr \dfrac{n^{m-1}m^{n}}{2} ?$$
\end{question}

The set of stable configurations on $K_{2,n}$ is $$\{(c_1,c_2,\ldots,c_{n+1})~:~0\leq c_1<n \mbox{ and } c_i \in \{0,1\},~  \forall \ 2\leq i \leq n+1\}.$$
In order to calculate the lacking polynomial $L_{2,n} (x)$ we first need an explicit characterization of the set $\Sto(K_{2,n})$ without repetitions. 
\begin{proposition}\label{firstchar}
$$\Sto(K_{2,n}) = \{(c_1,c_2,\ldots,c_{n+1}) : c_2,\ldots,c_{n+1} \in \{0,1\} \mbox{ and } c_1 \geq |{j \in [2,n+1]: c_j=0}| \}.$$
\end{proposition}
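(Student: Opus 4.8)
The plan is to apply Proposition~\ref{thSto} directly to $G=K_{2,n}$ and to exploit the very restricted edge structure of this graph: the vertex $v_1$ has degree $n$ and is joined to each of $v_2,\dots,v_{n+1}$, while each $v_j$ with $2\le j\le n+1$ has degree $2$ and is joined only to the sink $v_0$ and to $v_1$. By Proposition~\ref{thSto}, a stable configuration $c$ lies in $\Sto(K_{2,n})$ if and only if there is an orientation $\O$ with $\out_{\O}(v_i)\le c_i$ for every $i$; the conditions $c_i<d(v_i)$ appearing there are exactly the stability conditions $c_1<n$ and $c_j\in\{0,1\}$, which we assume throughout since $\Sto(K_{2,n})$ consists of stable configurations (so, tacitly, $0\le c_1<n$ and the displayed set on the right-hand side is finite). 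Thus it suffices to show that such an orientation exists precisely when $c_1\ge k$, where $k:=|\{j\in[2,n+1]:c_j=0\}|$.

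For the forward inclusion I would argue as follows. Suppose $\O$ is an orientation with $\out_{\O}(v_i)\le c_i$ for all $i$. For each index $j$ with $c_j=0$, the inequality $\out_{\O}(v_j)\le 0$ forces both edges incident to $v_j$ to be oriented towards $v_j$; in particular the edge $\{v_1,v_j\}$ is oriented from $v_1$ to $v_j$ and hence contributes $1$ to $\out_{\O}(v_1)$. Since these $k$ edges are distinct, $\out_{\O}(v_1)\ge k$, and therefore $c_1\ge \out_{\O}(v_1)\ge k$.

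For the reverse inclusion, given $c$ with $c_2,\dots,c_{n+1}\in\{0,1\}$ and $c_1\ge k$, I would exhibit an explicit compatible orientation: orient every edge $\{v_0,v_j\}$ towards $v_j$; orient $\{v_1,v_j\}$ towards $v_j$ when $c_j=0$ and towards $v_1$ when $c_j=1$. Then $\out_{\O}(v_j)=0=c_j$ for the indices with $c_j=0$, $\out_{\O}(v_j)=1=c_j$ for the indices with $c_j=1$, and $\out_{\O}(v_1)$ equals the number of indices $j$ with $c_j=0$, namely $k\le c_1$. Hence $\out_{\O}(v_i)\le c_i$ for all $i$, so $c\in\Sto(K_{2,n})$ by Proposition~\ref{thSto}, completing the equality of the two sets.

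I do not expect a serious obstacle here: the argument is essentially a bookkeeping exercise once one observes that, because $v_1$ shares an edge with every degree-$2$ vertex, the constraint $\out_{\O}(v_j)=0$ imposed by $c_j=0$ is exactly what pins one unit of out-degree onto $v_1$. The only points requiring a little care are keeping the orientation conventions straight (an edge oriented ``into $v_j$'' is the same as oriented ``out of $v_1$''), and remembering that membership in $\Sto$ silently carries stability with it, so the range $0\le c_1<n$ is automatically in force and no degenerate configurations sneak into the right-hand side.
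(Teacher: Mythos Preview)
Your proof is correct and follows essentially the same approach as the paper's: both apply Proposition~\ref{thSto}, deduce from $c_j=0$ that the edge $\{v_1,v_j\}$ must point out of $v_1$ (forcing $c_1\ge k$), and then construct a compatible orientation for the converse. If anything, your reverse inclusion is cleaner, since you specify the orientation completely (the paper's construction is stated more loosely as a list of constraints rather than an explicit choice).
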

\begin{proof}
Using Proposition~\ref{thSto} we know 
$$\Sto(K_{2,n})=\{c=(c_1,\ldots,c_{n+1}) ~:~ \exists \mbox{ an orientation $\O$ of $K_{2,n}$ compatible with $c$}\}.$$
Let us suppose $c=(c_1,\ldots,c_{n+1})$ is a member of $\Sto(K_{2,n})$. 
This means $$n-\ind_{\O}(v_1)=\out_{\O}(v_1) \leq c_1<n$$ and 
$$2-\ind_{\O}(v_j)  =  \out_{\O}(v_j)  \leq  c_j  <  2 \mbox{ for all }2\leq j \leq n+1.$$ 
Let us now see under what conditions one can construct an orientation $\O$ on $K_{2,n}$ that is compatible with a given $c$. 
Let $X$ be the set of indices $j$ in $[2,n+1]$ for which $c_j=0$, and where we use the notation $[a,b]:=\{a,a+1,\ldots,b\}$.
For any $j$ in $X$ the number of outgoing edges at vertex $v_j$ is zero $(\out_{\O}(v_j)=0)$, because we know that $\out_{\O}(v_j)\leq c_j<2$, so if $c_j=0$ then $\out_{\O}(v_j)=0$. 
Therefore, for all $j$ in $X$ there is one outgoing edge from $v_1$ to $v_j$, and hence vertex $v_j$ has one incoming edge from $v_1$. 
So the number of outgoing edges from $v_1$ is greater than or equal to the number of elements in $X$.

For any $j$ in $[2,n+1] \backslash  X$, we know that there is at most one outgoing edge from $v_j$ as $\out_{\O}(v_j)\leq c_j<2$.
So if $c_j=1$ then $\out_{\O}(v_j)\leq 1$. 
With these considerations in mind, we can construct an orientation $\O$ on $K_{2,n}$ that is compatible with a stable configuration $c$:
\begin{enumerate}
\item[(i)]  If $c_j=1$ with $2\leq j\leq n+1$ then there is at most one outgoing edge from $v_j.$
\item[(ii)] If $c_j=0$ and $2\leq j\leq n+1$ then there are no outgoing edges from $v_j.$ 
\item[(iii)] The number of outgoing edges from vertex $v_1$  is greater than or equal the number of vertices $v_j$ when $c_j=0$ for all $2\leq j\leq n+1.$
We know that the $\out_{\O}(v_1)$ is less than or equal to $c_1$ and greater than or equal to the number of vertices $v_j$ when $c_j=0$ for all $2\leq j\leq n+1.$ Therefore $c_1$ is greater than or equal to the number of vertices $v_j$ when $c_j=0$ for all $2\leq j\leq n+1.$
\end{enumerate}
There are no other restrictions that forbid us from constructing such an orientation $\O$. 
Therefore we can write down the following self-contained expression for the set $\Sto(K_{2,n})$ that does not depend on an orientation $\O$:
$$\Sto(K_{2,n}) = \{(c_1,c_2,\ldots,c_{n+1}) : c_2,\ldots,c_{n+1} \in \{0,1\} \mbox{ and } c_1 \geq |{j \in [2,n+1]: c_j=0}| \}.$$
\end{proof}

\begin{theorem}\label{thmla}
The lacking polynomial of the graph $K_{2,n}$ is 
$$L_{2,n} (x) =\sum_{k=0}^{n-1} \sum_{i=0}^{k} \binom{n}{i} x^k.$$
\end{theorem}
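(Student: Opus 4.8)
The plan is to read off $L_{2,n}(x)$ directly from Proposition~\ref{firstchar} by rewriting the lacking number $\ell(c)$ in terms of two elementary parameters of a stochastically recurrent state and then counting the states with each prescribed lacking number. First I would record the degrees in $K_{2,n}$: the vertex $v_1$ is joined to all of $v_2,\dots,v_{n+1}$, so $d(v_1)=n$, whereas each $v_j$ with $2\le j\le n+1$ is joined only to $v_0$ and $v_1$, so $d(v_j)=2$. Fix $c=(c_1,\dots,c_{n+1})\in\Sto(K_{2,n})$ and let $z=|\{j\in[2,n+1]:c_j=0\}|$ denote the number of $0$'s among $c_2,\dots,c_{n+1}$ (each of which lies in $\{0,1\}$ by Proposition~\ref{firstchar}). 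Since $\sum_{j=2}^{n+1}(1-c_j)$ counts exactly those zero entries,
$$\ell(c)=\bigl(d(v_1)-c_1-1\bigr)+\sum_{j=2}^{n+1}\bigl(d(v_j)-c_j-1\bigr)=(n-1-c_1)+z.$$
By Proposition~\ref{firstchar} a state is specified by choosing the positions of its $z$ zeros together with the value $c_1$, subject to $z\le c_1$ and (by stability, i.e.\ $c_1<d(v_1)=n$) to $0\le c_1\le n-1$.

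Next I would fix $k\in\{0,1,\dots,n-1\}$ and count the states with $\ell(c)=k$. The displayed identity forces $c_1=n-1+z-k$, so $c_1$ is determined by $z$; one checks that this value satisfies $0\le z\le c_1\le n-1$ exactly when $0\le z\le k$ (the inequality $c_1\ge z$ reduces to $k\le n-1$, the inequality $c_1\le n-1$ reduces to $z\le k$, and $c_1\ge 0$ is automatic because $k\le n-1$), while $z\le k\le n-1<n$ leaves no obstruction to placing the zeros among the $n$ coordinates. For each admissible $z$ there are $\binom{n}{z}$ ways to choose which of $c_2,\dots,c_{n+1}$ vanish, and $c_1$ is then forced, so the number of $c\in\Sto(K_{2,n})$ with $\ell(c)=k$ equals $\sum_{i=0}^{k}\binom{n}{i}$. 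Summing over $k$ gives the claimed formula $L_{2,n}(x)=\sum_{k=0}^{n-1}\sum_{i=0}^{k}\binom{n}{i}\,x^k$.

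The argument is essentially a bookkeeping exercise; the only point requiring care is pinning down the exact range $0\le z\le k$ of admissible zero-counts for a fixed target lacking number $k$, since it is this range — rather than the full range $0\le z\le n$ — that produces the partial binomial sum $\sum_{i=0}^{k}\binom{n}{i}$ as the coefficient of $x^k$. As a sanity check, $k=0$ gives coefficient $\binom{n}{0}=1$, matching the unique lacking-$0$ state $(n-1,1,\dots,1)$, while the top coefficient at $k=n-1$ is $\sum_{i=0}^{n-1}\binom{n}{i}=2^n-1$.
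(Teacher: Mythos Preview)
Your proof is correct and follows essentially the same approach as the paper: both use Proposition~\ref{firstchar}, express $\ell(c)$ as the sum of $n-1-c_1$ and the number of zeros among $c_2,\dots,c_{n+1}$, and then count the contributing states. The only cosmetic difference is that the paper first sums over the pair $(i,l_1)$ (number of zeros, lacking number at $v_1$) and then substitutes $k=i+l_1$, whereas you fix $k$ from the outset and determine the admissible range $0\le z\le k$ directly.
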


\begin{proof}
Definition~\ref{dela} gives
$$L_{2,n}(x)=\sum_{c \in \Sto(K_{2,n})} x^{\ell(c)}$$ where 
$$\ell(c) :=\sum_{v_i\in V(K_{2,n})\setminus \{v_0\}}  l(v_i) \qquad \mbox{ and }\qquad l(v_i):=d(v_i)- c(v_i)-1.$$
Proposition~\ref{firstchar} provides an explicit expression for the set $\Sto(K_{2,n})$ that can be used to calculate the lacking polynomial.
Let $c$ be in $\Sto(K_{2,n})$. 
Let $l_1=n-c_1-1$ be the lacking number at vertex $v_1$, and let $l_j=1-c_j$ be the lacking number at vertices $v_2,\ldots,v_{n+1}$ for all $j\in[2,n+1]$, so $l_j=0$ when $c_j=1$ and $l_j=1$ when $c_j=0.$
Let $i$ be the number of vertices $v_j$ with ${j \in [2,n+1]}$ and $c_j=0$ for which $l_j=1$.
The remaining vertices have $l_j=0$.  
Then $x^{\ell(c)}$ factors as $x^{\ell(c)} = x^{i}x^{l_1}.$
Since $c_1 \geq |{j \in [2,n+1]: c_j=0}|$, we conclude that $c_1\geq i$, therefore the lacking number at $v_1$ will be between 0 and $n-1-i$.
Since there are ${\tbinom {n}{i}}$ combinations of $i$ vertices with lacking number 1 among $\{v_2,\ldots,v_{n+1}\}$, we obtain 
$$L_{2,n}(x)=\sum_{i=0}^{n}\binom{n}{i}x^i \sum_{l_1=0}^{n-1-i}x^{l_1}.$$
Now setting $k=i+l_1$ we have
$$L_{2,n}(x)~=~\sum_{k=0}^{n-1}\sum_{i=0}^{k}\binom{n}{i} x^k.
$$ 
\end{proof}
Note that $L_{2,n}(1) = n2^{n-1} =n^{2-1}2^{n-1}$, and this matches the lower bound stated in (\ref{thmanup}).

A configuration $c=(c_1,c_2,\ldots,c_{m+1})$ on $K_{m,2}$ is stable precisely when 
$c_i \in \{0,1\}$ for all  $i\in\{1,\ldots,m-1\}$ and $0\leq c_j<m$ for all $j\in\{m,m+1\}$.
To calculate the lacking polynomial $L_{m,2}(x)$ requires an explicit characterization of the set $\Sto(K_{m,2})$. 

\begin{proposition}\label{secondchar}
$$\Sto(K_{m,2}) = \{(c_1,c_2,\ldots,c_{m+1}) ~:~ c_1,\ldots,c_{m-1} \in \{0,1\} \mbox{ and } c_{m}+c_{m+1} \geq m-1+|X| \}.$$
\end{proposition}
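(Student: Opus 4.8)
The plan is to follow the template of the proof of Proposition~\ref{firstchar}, the difference being that $K_{m,2}$ has \emph{two} vertices of large degree, namely $v_m$ and $v_{m+1}$ (each of degree $m$), while the remaining non-sink vertices $v_1,\ldots,v_{m-1}$ have degree $2$. I would start from Proposition~\ref{thSto}: a configuration $c=(c_1,\ldots,c_{m+1})$ lies in $\Sto(K_{m,2})$ if and only if some orientation $\O$ of $K_{m,2}$ satisfies $\out_\O(v_i)\le c_i<d(v_i)$ for all $i$. The constraints $c_i<d(v_i)$ immediately yield $c_1,\ldots,c_{m-1}\in\{0,1\}$ and $0\le c_m,c_{m+1}\le m-1$, so the real content is the claim that, for such a $c$, a compatible orientation exists precisely when $c_m+c_{m+1}\ge m-1+|X|$, where $X:=\{j\in\{1,\ldots,m-1\}:c_j=0\}$.

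For necessity I would show that every compatible orientation $\O$ obeys $\out_\O(v_m)+\out_\O(v_{m+1})\ge m-1+|X|$. Since $K_{m,2}$ has no edge between $v_m$ and $v_{m+1}$, every edge is incident to exactly one of them, so that sum counts edges directed from $\{v_m,v_{m+1}\}$ towards the other side, and I can account for these by their endpoints $v_1,\ldots,v_{m-1}$: for $j\in X$, the inequality $\out_\O(v_j)\le c_j=0$ forces both edges at $v_j$ (which go to $v_m$ and $v_{m+1}$) to point into $v_j$, contributing $2$; for each of the $m-1-|X|$ indices $j$ with $c_j=1$, the inequality $\out_\O(v_j)\le 1$ forces at least one edge at $v_j$ to point into $v_j$, contributing at least $1$. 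Summing these disjoint contributions gives $\out_\O(v_m)+\out_\O(v_{m+1})\ge 2|X|+(m-1-|X|)=m-1+|X|$, and then $c_m+c_{m+1}\ge \out_\O(v_m)+\out_\O(v_{m+1})$ finishes this direction.

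For sufficiency I would build a compatible orientation explicitly: orient $v_0v_m$ and $v_0v_{m+1}$ away from the sink; for $j\in X$ orient both edges at $v_j$ into $v_j$; for each remaining $j\in\{1,\ldots,m-1\}$ with $c_j=1$ orient exactly one edge into $v_j$ and one out of $v_j$. This already guarantees $\out_\O(v_j)\le c_j$ for all $j\le m-1$, and leaves one degree of freedom: for each of the $m-1-|X|$ vertices with $c_j=1$, whether its unique incoming edge is taken from $v_m$ or from $v_{m+1}$. If $a$ of them are fed from $v_m$, then $\out_\O(v_m)=|X|+a$ and $\out_\O(v_{m+1})=m-1-a$. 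The hard part — the only genuinely new ingredient compared with the $K_{2,n}$ case — is showing that some $a\in\{0,\ldots,m-1-|X|\}$ makes \emph{both} $\out_\O(v_m)\le c_m$ and $\out_\O(v_{m+1})\le c_{m+1}$ hold, not just their sum; this reduces to a short interval-nonemptiness check in which one uses $c_{m+1}\le m-1$ (stability) to deduce $c_m\ge m-1+|X|-c_{m+1}\ge|X|$ and, symmetrically, $c_{m+1}\ge|X|$, after which the hypothesis $c_m+c_{m+1}\ge m-1+|X|$ closes the gap. Having fixed such an $a$, the orientation $\O$ satisfies $\out_\O(v_i)\le c_i<d(v_i)$ for all $i$, so $c$ is compatible with $\O$ and hence belongs to $\Sto(K_{m,2})$ by Proposition~\ref{thSto}, completing the proof.
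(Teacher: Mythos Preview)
Your proof is correct and follows essentially the same approach as the paper's: starting from Proposition~\ref{thSto}, you derive the necessary inequality $c_m+c_{m+1}\ge m-1+|X|$ by summing the forced incoming edges at the $v_j$ with $j\le m-1$, and for sufficiency you construct a compatible orientation. Your sufficiency argument is in fact more carefully worked out than the paper's---you make explicit the interval-nonemptiness check (and the consequences $c_m\ge|X|$, $c_{m+1}\ge|X|$) needed to split the sum constraint between $v_m$ and $v_{m+1}$, which the paper only asserts.
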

\begin{proof}
From Proposition~\ref{thSto} we have 
$$\Sto(K_{m,2})=\{c=(c_1,\ldots,c_{m+1}) ~:~ \exists \mbox{ orientation $\O$ of $K_{m,2}$ compatible with $c$}\}.$$
Suppose $c=(c_1,\ldots,c_{m+1})$ is a member of $\Sto(K_{m,2}).$ 
This means that $$n-\ind_{\O}(v_i)=\out_{\O}(v_i) \leq c_i<2 \mbox{ for all }i\in\{1,\ldots,m-1\}$$ and 
$$2-\ind_{\O}(v_j) =\out_{\O}(v_j) \leq c_j < m \mbox{ for all }   j\in\{m,m+1\}.$$ 
Let us see under what conditions one can construct an orientation $\O$ on $K_{m,2}$ that is compatible with a given $c$. 
Let $X$ be the set of indices $i$ in $[1,m-1]$ for which $c_i=0$.
For any $i\in X$ the number of outgoing edges at vertex $v_i$ is zero $(\out_{\O}(v_i)=0)$ since $\out_{\O}(v_i)\leq c_i<2$. 
So if $c_i=0$ then $\out_{\O}(v_i)=0$.

Therefore, for all $i$ in $X$ there are is one outgoing edge from each of $v_m$ and $v_{m+1}$ to $v_i$.
Moreover, vertex $v_i$ has one incoming edge from each of $v_m$ and $v_m+1$. 
So the number of outgoing edges from $v_m$ is greater than or equal to the number of elements in $X$.
Also, the number of outgoing edges from $v_{m+1}$ is greater than or equal to the number of elements in $X$. 
Therefore $c_m\geq |X|$ and $c_{m+1}\geq |X|$.

For any $i\in [1,m-1]\backslash X$, we know that there is at most one outgoing edge from $v_i$ since $\out_{\O}(v_j)\leq c_j<2$, so if $c_j=1$ then $\out_{\O}(v_j)\leq 1.$ 
So the total number of outgoing edges from $v_m$ and $v_{m+1}$ to $v_i$ must at least equal $m-1-|X|.$ 
Therefore we can then construct such an orientation $\O$ of $K_{m,2}$ that is compatible with a stable configuration $c$ precisely when $c_m+c_{m+1}\geq 2|X|+m-1-|X|=m-1+|X|.$
There are no other restrictions that forbid us from constructing such an orientation $\O$. 
Therefore we can write down the following self-contained expression for the set $\Sto(K_{m,2})$ that does not depend on an orientation $\O$:
$$\Sto(K_{m,2}) = \{(c_1,c_2,\ldots,c_{m+1}) ~:~ c_1,\ldots,c_{m-1} \in \{0,1\} \mbox{ and } c_{m}+c_{m+1} \geq m-1+|X| \}.$$
\end{proof}

\begin{theorem}\label{thmll}
The lacking polynomial for the graph $K_{m,2}$ is 
$$L_{m,2} (x) =\sum_{k=0}^{m-1}  S(m-1,k) x^k$$ where
$$S(m-1,k)=\sum_{q=0}^{k} \sum_{r=0}^{q} \binom{m-1}{r} \mbox{ for all }\ 0\leq k\leq m-1.$$
\end{theorem}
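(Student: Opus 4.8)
The plan is to mirror the structure of the proof of Theorem~\ref{thmla}, using the characterization of $\Sto(K_{m,2})$ from Proposition~\ref{secondchar} together with the definition of the lacking polynomial. First I would set up the lacking numbers: for $c\in\Sto(K_{m,2})$, the vertices $v_1,\ldots,v_{m-1}$ each have degree $2$, so $l_i = 2 - c_i - 1 = 1 - c_i \in \{0,1\}$, while $v_m$ and $v_{m+1}$ each have degree $m-1$, so $l_m = m-1-c_m-1 = m-2-c_m$ and $l_{m+1} = m-2-c_{m+1}$. Writing $|X| = r$ for the number of indices $i\in[1,m-1]$ with $c_i=0$, the contribution of $v_1,\ldots,v_{m-1}$ to $\ell(c)$ is exactly $r$, and there are $\binom{m-1}{r}$ ways to choose which of these vertices are $0$.

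Next I would handle the contribution of $v_m$ and $v_{m+1}$. Given a fixed value of $r = |X|$, Proposition~\ref{secondchar} says the only constraint on $(c_m, c_{m+1})$ is $c_m + c_{m+1} \geq m-1+r$, together with stability $0\le c_m, c_{m+1} \le m-1$. The total lacking number from these two vertices is $l_m + l_{m+1} = 2(m-2) - (c_m+c_{m+1}) = 2m-4 - (c_m+c_{m+1})$. Since $c_m+c_{m+1}$ ranges over integers from $m-1+r$ up to $2(m-1)$, the quantity $l_m+l_{m+1}$ ranges over integers from $0$ up to $(2m-4)-(m-1+r) = m-3-r$. So I need to count, for each target value $q$ of the combined lacking number $l_m+l_{m+1}$, how many pairs $(c_m,c_{m+1})$ achieve it; equivalently, how many pairs with $c_m+c_{m+1} = s$ for each feasible $s$. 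The number of such pairs with $0 \le c_m,c_{m+1}\le m-1$ and $c_m+c_{m+1}=s$ is $s+1$ when $s \le m-1$ and $2(m-1)-s+1$ when $s\ge m-1$; since we only use $s \ge m-1+r \ge m-1$, the count is $2m-1-s = q+1$ when $l_m+l_{m+1}=q$ (using $s = 2m-4-q$, giving $2m-1-s = q+3$ — so I will need to recheck this arithmetic carefully; the intended answer suggests the count at combined lacking $q$ is the number of pairs $(c_m,c_{m+1})$ summing to $2m-4-q$, which for $s$ near the top equals $2(m-1)-s+1 = q+1$ when $s = 2m-3-q$, i.e.\ using degree conventions I should double-check).

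Then I would assemble the polynomial. For fixed $r$, summing $x^{r + (l_m+l_{m+1})}$ over all configurations gives $\binom{m-1}{r} x^r \sum_{q\ge 0} (\text{count of pairs at combined lacking }q)\, x^q$, and the inner sum should telescope to $\sum_{q} \bigl(\sum_{p=0}^{q} 1\bigr) x^q$ after identifying the pair-count at level $q$ as $q+1 = \sum_{p=0}^q 1$. Substituting $k = r + q$ and swapping the order of summation, exactly as in Theorem~\ref{thmla}, collapses everything into $\sum_{k=0}^{m-1} x^k \sum_{q=0}^{k} \sum_{r=0}^{q} \binom{m-1}{r}$, which is the claimed formula with $S(m-1,k) = \sum_{q=0}^k\sum_{r=0}^q \binom{m-1}{r}$.

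The main obstacle is the bookkeeping in the middle step: correctly counting the number of pairs $(c_m, c_{m+1})$ with prescribed sum subject to the two-sided stability bounds $0\le c_j \le m-1$, checking that for the relevant range $c_m+c_{m+1} \ge m-1+r$ we are always on the "upper triangle" side where the count is a clean linear function of the combined lacking number, and confirming that this count is precisely $q+1$ so that the inner generating function matches $\sum_{q}(q+1)x^q$ and telescopes into the double-sum form of $S(m-1,k)$. The index shift $k = r+q$ and the interchange of summations are then routine and identical in spirit to the $K_{2,n}$ case.
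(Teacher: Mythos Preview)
Your overall strategy is exactly the paper's: use Proposition~\ref{secondchar}, split $\ell(c)$ into the contribution $r=|X|$ from $v_1,\ldots,v_{m-1}$ and the contribution $l_m+l_{m+1}$ from $v_m,v_{m+1}$, count pairs $(c_m,c_{m+1})$ at each fixed combined lacking value, and then reindex via $k=r+q$. So there is no methodological gap.

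The genuine error is the degree of $v_m$ and $v_{m+1}$. These vertices are adjacent to all of $v_0,\ldots,v_{m-1}$, including the sink $v_0$, so $d(v_m)=d(v_{m+1})=m$, not $m-1$. Hence $l_m=m-1-c_m$ and $l_{m+1}=m-1-c_{m+1}$, giving
\[
l_m+l_{m+1}=2(m-1)-(c_m+c_{m+1}),
\]
which, under the constraint $c_m+c_{m+1}\ge m-1+r$ and stability $0\le c_m,c_{m+1}\le m-1$, ranges over $0\le l_m+l_{m+1}\le m-1-r$. With the correct degree, for $s=c_m+c_{m+1}\ge m-1$ the number of pairs is $2(m-1)-s+1$, and substituting $s=2(m-1)-q$ yields exactly $q+1$, not $q+3$. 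This resolves the arithmetic you flagged; your confusion was entirely caused by the off-by-one in the degree. After this fix the inner sum is $\sum_{q=0}^{m-1-r}(q+1)x^q$, and your reindexing $k=r+q$ gives $\sum_{k}\sum_{r=0}^{k}\binom{m-1}{r}(k-r+1)x^k$, which equals $\sum_k S(m-1,k)x^k$ upon rewriting $k-r+1=|\{q:r\le q\le k\}|$ and swapping the order of summation over $(r,q)$.
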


\begin{proof}
The lacking polynomial $L_{m,2}(x)$, given in Definition~\ref{dela}, is
$$L_{m,2}(x) = \sum_{c \in \Sto(K_{m,2})} x^{\ell(c)}$$ where 
$$\ell(c) =\sum_{v_i\in V(K_{m,2})\setminus \{v_0\}}  l(v_i), \qquad \mbox{ and } \qquad l(v_i)=d(v_i)- c(v_i)-1.$$ 
Proposition~\ref{secondchar} provides 
an explicit expression for the set $\Sto(K_{m,2})$ that can now be used to calculate $L_{m,2}(x)$.
Let $c$ be in $\Sto(K_{m,2})$. 
Let $l_i=1-c_i$ be the lacking number of vertex $v_i$ for all $i \in [1,m-1]$, so that $l_i=0$ when $c_i=1$ and $l_i=1$ when $c_i=0.$
For the vertices $v_m$ and $v_{m+1}$ the lacking numbers are $l_m=m-1-c_m$ and $l_{m+1}=m-1-c_{m+1}$, respectively. 
We factor $x^{\ell(c)} = x^{|X|}x^{l_m+l_{m+1}}$ and
can now write $c_{m}+c_{m+1} \geq m-1+|X|$ in terms of lacking numbers as:
$$m-1-l_{m}+m-1-l_{m+1} \geq m-1+|X|$$ which is equivalent to $$m-1-|X|\geq l_m +l_{m+1}.$$
Now suppose that $r = l_m + l_{m+1}$, then $r$ ranges in between 0 and $m-1 -|X|$.
For each choice of $r$ we have exactly $r + 1$ choices for $(l_m, l_{m+1})$.
Now let $j=|X|$ be the number of vertices $v_i$ with ${i \in [1,m-1]}$ and $c_i=0$ for which then $l_i=1$. 
The remaining vertices will have $l_i=0$. 
So there are ${\tbinom {m-1}{j}}$ combinations of $j$ vertices with lacking number 1 among $\{v_1,\ldots,v_{m-1}\}$, and we obtain 
$$L_{m,2}(x)=\sum_{j=0}^{m-1}\binom {m-1}{j}x^j \sum_{r=0}^{m-1-j}(r+1) x^{r}= \sum_{j=0}^{m-1}\sum_{r=0}^{m-1-j}\binom {m-1}{j}(r+1) x^{r+j}.$$
Set $k=j+r$. Then $k$ runs from 0 to $m-1$ so that $j$ can run from 0 to $k$ and, with $r = k-j$, we obtain the sum
$$L_{m,2}(x)=\sum_{k=0}^{m-1}\sum_{j=0}^{k}\tbinom {m-1}{j}(k-j+1) x^{k}.$$
This is equal to
$$L_{m,2}(x)=\sum_{k=0}^{m-1}  S(m-1,k) x^k$$ where
$$S(m-1,k)=\sum_{q=0}^{k} \sum_{r=0}^{q} \binom{m-1}{r} \mbox{ for all }  0\leq k\leq m-1.\qedhere$$
\end{proof}
Note that the sequence $(L_{m,2}(1)_{m\geq 1}$ corresponds to sequence A084851 in the OEIS~\cite{oeis}.

A sequence $a_0,a_1,\ldots,a_n$ of non-negative real numbers is said to be {\em logarithmically concave}, or {\em log-concave}, if for all $0 < k < n$,
$a_k^2- a_{k-1} a_{k+1}\geq 0.$

\begin{lemma}\label{thmlog}
Suppose $(x_0,x_1,\ldots,x_n)$ is a log-concave sequence.
Then the sequence $(z_0,\ldots,z_n)$ of partial sums defined by
$$z_k=\sum_{i=0}^{k} x_i $$
is also log-concave. 
\end{lemma}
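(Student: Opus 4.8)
The plan is to prove that log-concavity is preserved under taking partial sums by reducing everything to the single inequality $z_k^2 - z_{k-1}z_{k+1} \geq 0$ and then exploiting the defining relations $z_k = z_{k-1} + x_k$ and $z_{k+1} = z_k + x_{k+1}$. First I would substitute these into the target expression. Writing $z_{k-1} = z_k - x_k$ and $z_{k+1} = z_k + x_{k+1}$, a short expansion gives
$$z_k^2 - z_{k-1}z_{k+1} = z_k^2 - (z_k - x_k)(z_k + x_{k+1}) = z_k(x_k - x_{k+1}) + x_k x_{k+1}.$$
So the whole problem is to show $z_k(x_k - x_{k+1}) + x_k x_{k+1} \geq 0$ for each $0 < k < n$, using only that the $x_i$ are non-negative and that $x_i^2 \geq x_{i-1}x_{i+1}$ for all valid $i$.

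The key observation I would use is that log-concavity of a non-negative sequence forces the ratios $x_{i+1}/x_i$ to be non-increasing (on the range where the terms are positive), and more usefully it yields the chain of inequalities $x_i x_{k+1} \leq x_{i+1} x_k$ whenever $i \leq k$. Summing this over $i = 0, 1, \ldots, k$ gives
$$x_{k+1}\sum_{i=0}^{k} x_i \;\leq\; x_k\sum_{i=0}^{k} x_{i+1} \;=\; x_k\Bigl(\sum_{i=0}^{k} x_i - x_0 + x_{k+1}\Bigr) \;=\; x_k\bigl(z_k - x_0 + x_{k+1}\bigr),$$
that is, $x_{k+1} z_k \leq x_k z_k - x_0 x_k + x_k x_{k+1}$, which rearranges to exactly $z_k(x_k - x_{k+1}) + x_k x_{k+1} \geq x_0 x_k \geq 0$. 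This is the needed inequality, and in fact with a small margin to spare.

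The one point that needs care — and the main (mild) obstacle — is the telescoping/summation step $x_i x_{k+1} \leq x_{i+1} x_k$ for $i < k$: this is immediate when all the intermediate terms are positive (multiply the chain of ratio inequalities), but one should handle the possibility of zero terms. Here log-concavity does not by itself forbid internal zeroes, so I would argue directly: if some $x_j = 0$ with $j$ in the relevant range, then $x_{j-1}x_{j+1} \leq x_j^2 = 0$, which propagates zeroes outward and makes the inequality $x_i x_{k+1} \leq x_{i+1} x_k$ hold trivially (both sides become $0$, or the larger side is nonnegative). Once that case analysis is dispatched, summing over $i$ and rearranging as above completes the proof; I would also note the boundary behaviour ($k=0$ or $k=n$) is vacuous or trivial since the log-concavity condition is only required for $0 < k < n$.
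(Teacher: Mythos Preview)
Your approach differs from the paper's: the paper simply invokes a theorem of Wang and Yeh stating that the convolution of two log-concave sequences is log-concave, and observes that the sequence of partial sums is the convolution of $(x_i)$ with the constant sequence $(1,1,\ldots,1)$. Your argument is direct and self-contained: you reduce everything to the identity $z_k^2 - z_{k-1}z_{k+1} = z_k(x_k - x_{k+1}) + x_k x_{k+1}$ and then establish nonnegativity by summing the inequalities $x_i x_{k+1} \le x_{i+1} x_k$ over $i\le k$. This is a clean elementary route that works perfectly when the terms are positive, and it even makes the inequality quantitative, exhibiting the margin $x_0 x_k$.

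One genuine gap, however: your handling of internal zeros does not go through. The implication $x_j^2 \ge x_{j-1}x_{j+1}$ with $x_j=0$ forces only \emph{one} neighbour to vanish, not both, so zeros need not ``propagate outward'' in both directions. Concretely, $(1,0,0,1)$ is log-concave under the paper's stated definition, yet for $i=0$, $k=2$ your key inequality reads $x_0 x_3 \le x_1 x_2$, i.e.\ $1\le 0$, which is false. In fact the lemma itself fails on this example (the partial sums $(1,1,1,2)$ are not log-concave), so this is really an omission in the hypotheses rather than a flaw unique to your method: both your argument and the Wang--Yeh convolution theorem need the standard ``no internal zeros'' assumption, which the paper's applications (iterated partial sums of binomial coefficients) certainly satisfy.
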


\begin{proof}
Wang and Yeh~\cite{wang}
 proved that 
if sequences $(x_k)$ and $(y_k)$ are log-concave, then the sequence $(z_k)$ where $z_k$ is the ordinary convolution 
$$z_k=\sum_{i=0}^{k} x_i y_{n-i}$$
is log-concave.
The sequence $(y_0,\ldots,y_k)=(1,\ldots,1)$ is trivially log-concave. 
The statement of the lemma follows since it is the special case with all $y_j$'s replaced with 1s.
\end{proof}

\begin{theorem}\label{thmlogc}
\begin{enumerate}
\item[]
\item[(i)] The sequence of coefficients of the lacking polynomial $L_{2,n}(x)$ is log-concave.
\item[(ii)] The sequence of coefficients of the lacking polynomial $L_{m,2}(x)$ is log-concave.
\end{enumerate}
\end{theorem}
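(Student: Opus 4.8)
The plan is to read the coefficient sequences off Theorems~\ref{thmla} and~\ref{thmll}, recognise each as the result of iterating the partial-sum operation on a row of binomial coefficients, and then combine Lemma~\ref{thmlog} with the classical fact that $\bigl(\binom{N}{0},\binom{N}{1},\ldots,\binom{N}{N}\bigr)$ is log-concave. That last fact is immediate from $\binom{N}{k}^2/\bigl(\binom{N}{k-1}\binom{N}{k+1}\bigr) = \frac{(N-k+1)(k+1)}{k(N-k)} = 1 + \frac{N+1}{k(N-k)} > 1$.

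For part~(i): by Theorem~\ref{thmla} the coefficient of $x^k$ in $L_{2,n}(x)$ is $a_k=\sum_{i=0}^{k}\binom{n}{i}$ for $0\le k\le n-1$, which is a prefix of the partial-sum sequence of the log-concave row $\bigl(\binom{n}{0},\ldots,\binom{n}{n}\bigr)$. Lemma~\ref{thmlog} shows the full partial-sum sequence $(z_0,\ldots,z_n)$, $z_k=\sum_{i=0}^{k}\binom{n}{i}$, is log-concave; and any contiguous subsequence of a log-concave sequence is log-concave because the defining inequality $a_k^2\ge a_{k-1}a_{k+1}$ involves only consecutive triples, so in particular the prefix $(z_0,\ldots,z_{n-1})=(a_0,\ldots,a_{n-1})$ is log-concave. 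That settles~(i).

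For part~(ii): by Theorem~\ref{thmll} the coefficient of $x^k$ in $L_{m,2}(x)$ is $S(m-1,k)=\sum_{q=0}^{k}\sum_{r=0}^{q}\binom{m-1}{r}$ for $0\le k\le m-1$, i.e. the partial sums of the partial sums of the row $\bigl(\binom{m-1}{0},\ldots,\binom{m-1}{m-1}\bigr)$. Starting from that log-concave row, one application of Lemma~\ref{thmlog} shows $(t_0,\ldots,t_{m-1})$, $t_q=\sum_{r=0}^{q}\binom{m-1}{r}$, is log-concave, and a second application shows $\bigl(S(m-1,0),\ldots,S(m-1,m-1)\bigr)$ is log-concave. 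This is precisely the coefficient sequence of $L_{m,2}(x)$, proving~(ii).

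There is no genuine obstacle: the argument is a routine composition of Lemma~\ref{thmlog} with the log-concavity of a binomial row. The only points needing a moment's care are that every term in sight is strictly positive --- true, being a (possibly iterated) partial sum of positive binomial coefficients --- so that no ``no internal zeroes'' caveat attached to the Wang--Yeh convolution result can interfere, and the elementary truncation remark used in~(i). One could instead try to verify $a_k^2-a_{k-1}a_{k+1}\ge 0$ and $S(m-1,k)^2-S(m-1,k-1)S(m-1,k+1)\ge 0$ directly from the explicit formulae, but this is more cumbersome and obscures the structural reason, namely that partial summation preserves log-concavity.
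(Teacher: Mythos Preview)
Your proof is correct and follows essentially the same route as the paper: read off the coefficient sequences from Theorems~\ref{thmla} and~\ref{thmll}, invoke the log-concavity of the binomial row, and apply Lemma~\ref{thmlog} once for~(i) and twice for~(ii). If anything you are slightly more careful than the paper, since you explicitly note that the coefficient sequence in~(i) is the prefix $(z_0,\ldots,z_{n-1})$ of the length-$(n+1)$ partial-sum sequence and that truncation preserves log-concavity, and you remark on positivity to dispose of any ``no internal zeroes'' hypothesis in the Wang--Yeh convolution lemma.
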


\begin{proof}
(i)
From Theorem~\ref{thmla} we have
$$L_{2,n} (x) =\sum_{k=0}^{n-1}  T(n,k) x^k \quad \mbox{ where }\quad T(n,k):=\sum_{i=0}^{k} \binom{n}{i}.$$
It is well-known that the sequence of binomial coefficients
$\left(\binom{n}{k}\right)_{k=0,1,2,\ldots,n}$
is log-concave. By Lemma~\ref{thmlog}, the sequence $(T(n,k))_{k=0,\ldots,n}$ 
is log-concave. 
Therefore the sequence of coefficients of the lacking polynomial $L_{2,n}(x)$ is log-concave.
\ \\ \noindent
(ii)
From Theorem~\ref{thmll} we have
$$L_{m,2} (x) =\sum_{k=0}^{m-1}  S(m-1,k) x^k$$ where
$$S(m-1,k):=\sum_{q=0}^{k} \sum_{r=0}^{q} \binom{m-1}{r} \mbox{  for all } 0<k\leq m-1.$$
Let $R(m-1,q) := \sum_{r=0}^{q} \binom{m-1}{r}$.
By Lemma~\ref{thmlog} the sequence $(R(m-1,q))_{q=0,\ldots,m-1}$
is log-concave. Then, again by an application of Lemma~\ref{thmlog}, the sequence
$\left(S(m-1,k)\right)_{k=0,\ldots,m-1}$ $ =$ $\left( \sum_{q=0}^k R(m-1,q) \right)_{k=0,\ldots,m-1}$
is also log-concave. 
Therefore the sequence of coefficients of the lacking polynomial $L_{m,2}(x)$ is log-concave.
\end{proof}

The two results in Theorem~\ref{thmlogc} suggest that log-concavity might be a property of these lacking polynomials for the general complete bipartite graph.
We have also verified this for all $(m,n)$ with $m,n\geq 2$ and $m+n\leq 8$ (see Table~\ref{tabtwo}).
Since log-concavity of a sequence implies unimodality of the sequence, we may also conjecture this latter property in the event that log-cavity does not hold.

\protect{
\begin{conjecture}\label{conjeleven}
Let $m,n\geq 2$.
\begin{enumerate}
\item[(i)] The sequence of coefficients of $L_{m,n}(x)$ is log-concave.
\item[(ii)] The sequence of coefficients of $L_{m,n}(x)$ is unimodal.
\end{enumerate}
\end{conjecture}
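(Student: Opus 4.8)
The plan is to begin by proving a characterisation of $\Sto(K_{m,n})$ that generalises Propositions~\ref{firstchar} and~\ref{secondchar} and makes no reference to orientations. Encode an orientation $\O$ by the $0/1$ matrix $A=(a_{ij})$ whose rows are indexed by the left vertices $v_0,\dots,v_{m-1}$ and whose columns by the right vertices, with $a_{ij}=1$ precisely when the edge between $v_i$ and $v_j$ is directed from $v_i$ to $v_j$; then $\out_{\O}(v_i)$ is the $i$-th row sum for a left vertex while $\out_{\O}(v_j)$ is $m$ minus the $j$-th column sum for a right vertex. By Proposition~\ref{thSto}, a stable configuration $c$ is stochastically recurrent exactly when such a matrix exists whose non-sink row sums are at most the corresponding left entries of $c$ (the sink row being unconstrained, so it may be taken to be all $1$s) and whose column sums are at least $m$ minus the corresponding right entries. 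This is a feasibility problem for $0/1$ matrices with row-sum upper bounds and column-sum lower bounds, so the Gale--Ryser theorem in its defect (inequality) form applies: writing $\lambda_i=n-1-c_i$ for the left lacking numbers and $\mu_j$ for the right lacking numbers, the matrix exists if and only if the weakly decreasing rearrangement of $(\mu_j)$ is dominated, in the weak majorisation order, by the conjugate of the partition formed by the left entries $(c_i)_{1\le i\le m-1}$. One checks directly that for $m=2$ and for $n=2$ this single majorisation condition collapses to the single inequalities appearing in Propositions~\ref{firstchar} and~\ref{secondchar}.

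Next I would convert this characterisation into an explicit formula for $L_{m,n}(x)=\sum_c x^{\ell(c)}$, where $\ell(c)=\sum_i\lambda_i+\sum_j\mu_j$, obtaining a double sum in which, for each fixed choice of left lacking numbers, the inner sum runs over an order ideal of the majorisation order on the right lacking numbers. When $\min(m,n)=2$ this inner order ideal is a single chain of partial sums, which is precisely why Lemma~\ref{thmlog}, together with the convolution theorem of Wang and Yeh behind it, already suffices in Theorem~\ref{thmlogc}. For general $m,n$ the goal would be to express $L_{m,n}(x)$ as a sum of polynomials each built from the all-ones sequence by iterated partial-sum operations (covered by Lemma~\ref{thmlog}) and products (covered by Wang--Yeh); more ambitiously, one could try to show that the homogenisation $y^{(m-1)(n-1)}L_{m,n}(x/y)$ is a Lorentzian polynomial, which by Br\"and\'en and Huh's characterisation of univariate Lorentzian polynomials is equivalent to the coefficient sequence being log-concave with no internal zeroes; the natural object to test here is the multivariate refinement $\sum_c\prod_i x_i^{\lambda_i}\prod_j y_j^{\mu_j}$ and whether its exponent set is M-convex.

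The step I expect to be the main obstacle is exactly the passage from $\min(m,n)=2$ to the general case. For $\min(m,n)\ge 3$ the majorisation constraint is a genuine conjunction of several inequalities rather than a single one; already on $K_{3,3}$ the configuration $c=(0,2,0,2,2)$, whose lacking sum equals $(m-1)(n-1)=4$, the top degree of $L_{3,3}$, fails to be stochastically recurrent, whereas $(2,2,0,0,2)$, with the same lacking sum, is recurrent. Thus $\Sto(K_{m,n})$ is strictly smaller than the order ideal $\{\,l:\textstyle\sum_i l_i\le (m-1)(n-1)\,\}$ (intersected with the box of stable configurations) which describes the two boundary families $K_{2,n}$ and $K_{m,2}$; the order ideal being summed is no longer a product of chains, Lemma~\ref{thmlog} cannot be applied coordinate by coordinate, and the convolutional structure that makes Theorem~\ref{thmlogc} work breaks down. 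To get past this one would need either a direct combinatorial injection witnessing log-concavity on $\Sto(K_{m,n})$ graded by $\ell$, or a proof that the specific order ideals in the dominance order produced by Gale--Ryser have log-concave rank-generating functions --- a property that fails for arbitrary order ideals in products of chains, so any argument must exploit the special shape coming from bipartite degree sequences.

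Finally, part~(ii) admits a more modest independent line of attack, since log-concavity with no internal zeroes implies unimodality: it would be enough to construct, for each $k$ below the modal coefficient, an injection from the stochastically recurrent configurations with $\ell(c)=k$ into those with $\ell(c)=k+1$, and symmetrically above the mode. Such rank-raising maps --- for instance decreasing a suitably chosen coordinate of $c$ while staying inside $\Sto(K_{m,n})$ --- are often easier to build than full log-concavity injections, and establishing that $\ell$ is balanced in this sense, possibly by induction on $m+n$ while tracking how $\Sto$ changes under removing a vertex, would at least settle the unimodality half of the conjecture.
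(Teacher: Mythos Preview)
This statement is a \emph{conjecture} in the paper and is not proven there; the paper offers only the two boundary cases $K_{2,n}$ and $K_{m,2}$ (Theorem~\ref{thmlogc}) together with numerical verification for $m+n\le 8$, and leaves both parts open. There is therefore no proof in the paper against which to compare your proposal.

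Your proposal is likewise not a proof but a research outline, and you correctly identify where it stalls: the passage from $\min(m,n)=2$ to general $m,n$. The concrete ingredients you supply --- the Gale--Ryser encoding of $\Sto(K_{m,n})$, the observation that in the boundary cases the stochastically recurrent configurations are exactly the stable ones with $\ell(c)\le(m-1)(n-1)$, and the $K_{3,3}$ example $(0,2,0,2,2)$ showing this description fails in general --- are correct and sharpen the picture, but none of the three attacks you sketch (iterated Wang--Yeh convolutions, the Lorentzian/M-convex route, or rank-raising injections for part~(ii)) is carried to the point of an argument. In particular, you yourself note that log-concavity of rank-generating functions fails for arbitrary order ideals in products of chains, so invoking the ``special shape coming from bipartite degree sequences'' is precisely the missing step; and the injection strategy for unimodality is stated as a hope rather than constructed. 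As written, the proposal neither resolves the conjecture nor goes beyond what the paper already establishes.
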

}

It is worth mentioning that the more general property of these polynomials having roots that all lie in the region $S=\{z \in \mathbb{C}~:~ 2\pi/3 < \arg(z) < 4\pi/3\}$ 
, which would imply log-concavity by Stanley~\cite[Prop. 7]{stanley}, does not hold.
This is verified by considering the roots of $L_{2,4}(x) = 1+ 5x+ 11x^2+ 15x^3$. 
It has roots $\{-1/3, (-1 \pm 2i)/5\}$ and the two complex roots are not in the region $S$.

\begin{table}
\footnotesize
\fbox{
\begin{minipage}{0.8\textwidth}
\begin{align*}
L_{2,2}(x) &= 1+ 3x\\
L_{2,3}(x) &= 1+ 4x+ 7x^2\\
L_{2,4}(x) &= 1+ 5x+ 11x^2+ 15x^3\\
L_{2,5}(x) &= 1+ 6x+ 16x^2+ 26x^3+ 31x^4\\[0.5em]
L_{3,2}(x) &= 1+ 4x+ 8x^2\\
L_{3,3}(x) &= 1+ 5x+ 15x^2+ 30x^3+ 39x^4\\
L_{3,4}(x) &= 1+ 6x+ 21x^2+ 52x^3+ 100x^4+ 148x^5+ 158x^6\\
L_{3,5}(x) &= 1+ 7x+ 28x^2+ 79x^3+ 175x^4+ 320x^5+ 490x^6+ 610x^7+ 585x^8\\[0.5em]
L_{4,2}(x) &= 1+ 5x+ 12x^2+ 20x^3\\
L_{4,3}(x) &= 1+ 6x+ 21x^2+ 53x^3+ 105x^4+ 162x^5+ 189x^6\\
L_{4,4}(x) &= 1+ 7x+ 28x^2+ 84x^3+ 203x^4+ 413x^5+ 716x^6+ 1068x^7+ 1344x^8+ 1336x^9\\[0.5em]
L_{5,2}(x) &= 1+ 6x+ 17x^2+ 32x^3+ 48x^4\\
L_{5,3}(x) &= 1+ 7x+ 28x^2+ 80x^3+ 182x^4+ 347x^5+ 561x^6+ 756x^7+ 837x^8
\end{align*}
\end{minipage}
}
\caption{Lacking polynomials $L_{m,n}(x)$ for some small $m$ and $n$. \label{tabtwo}}
\end{table}

\section*{Acknowledgments}
\noindent M.D. wishes to acknowledge the financial support from University College Dublin (OBRSS Research Support Scheme 16426) for this work.

\end{document}